\documentclass[12pt]{amsart}
\usepackage{mathrsfs}

\usepackage[T1]{fontenc} % Better font encoding (supports accents, hyphenation)
\usepackage{lmodern} % Latin Modern fonts (vector version of Computer Modern)
\usepackage{microtype} % Subtle typographic enhancements (kerning, spacing)

\usepackage{mathtools} % Enhancements to amsmath (recommended over amsmath)
\usepackage{amssymb} % Extra math symbols
\usepackage{mathbbol} % Extended \mathbb{} support (digits, lowercase)

\usepackage[dvipsnames]{xcolor} % Named colors (e.g., NaviBlue, ForestGreen)

\usepackage{tikz-cd} % Commutative diagrams

\usepackage{csquotes} % Context-aware quotation marks via \enquote{}

\usepackage{xargs} % Multiple optional args with \newcommandx

\usepackage{enumerate} % Custom labels in numbered lists
\usepackage{enumitem} % Full control over list spacing, labels, etc.

\usepackage{todo} % Package for notes, to-do boxes, and remarks

\usepackage{tikz} % Drawing package
\usepackage{tikz-3dplot} % 3D plots in TikZ
\usetikzlibrary{shapes, positioning, calc, arrows}

\usepackage{todo}

\usepackage[
bookmarks=true,
linktocpage=true,
bookmarksnumbered=true,
breaklinks=true,
pdfstartview=FitH,
hyperfigures=false,
plainpages=false,
naturalnames=true,
colorlinks=true,
pagebackref=false,
pdfpagelabels
]{hyperref}

\hypersetup{
	colorlinks,
	citecolor=NavyBlue,
	filecolor=NavyBlue,
	linkcolor=NavyBlue,
	urlcolor=NavyBlue
}

\usepackage{xr-hyper}
\usepackage[capitalise,noabbrev,nosort]{cleveref}
\newcommand{\sSet}{\mathsf{sSet}}
\newcommand{\Set}{\mathsf{Set}}

\renewcommand{\th}{\mathrm{th}}
\newcommand{\op}{\mathrm{op}}
\newcommand{\simplex}{\triangle}
\newcommand{\gsimplex}{\mathbb{\Delta}}
\newcommand{\sing}{\mathrm{Sing}}
\newcommand{\defeq}{\coloneqq}

\theoremstyle{plain}
\newtheorem{theorem}{Theorem}
\newtheorem{corollary}[theorem]{Corollary}
\newtheorem{lemma}[theorem]{Lemma}

\theoremstyle{definition}

\theoremstyle{remark}

\newcommand{\id}{\textup{id}}

\newcommand{\Hom}{\textup{Hom}}

\newcommand{\xr}{\xrightarrow}

\newcommand{\Z}{\mathbb{Z}}

\newcommand{\R}{\mathbb{R}}

\newcommand{\mc}[1]{\mathcal{#1}}

\newcommand{\defn}[1]{{\color{Sepia} \emph{#1}}}

\DeclarePairedDelimiter\set{\{}{\}}

\newcommand{\bd}{\partial}

\newcommand{\diag}{\mathrm{d}}

\title{The transverse singular complex}

\author[G. Friedman]{Greg Friedman}
\address{Texas Christian University}
\email{g.friedman@tcu.edu}

\author[A. Medina-Mardones]{Anibal M. Medina-Mardones}
\address{Western University}
\email{anibal.medina.mardones@uwo.ca}

\author[D. Sinha]{Dev Sinha}
\address{University of Oregon}
\email{dps@uoregon.edu}

\date{\today}

\subjclass[2020]{%
	57R19, % Algebraic topology on manifolds and differential topology
	57N75 % General position and transversality
	55N10, % Singular homology and cohomology theory
	58A35. % Stratified sets
	}

\keywords{%
	Transversality,
	singular simplicial set,
	singular homology,
	manifolds with corners,
	smooth singular chains}

\begin{document}
%	\begin{center}
%		---PREPRINT---
%		\vskip 20pt
%	\end{center}
	\begin{abstract}
	Let $M$ be a smooth manifold without boundary and let $\mc T$ be a countable collection of manifolds with corners, each equipped with a smooth map to $M$.
	We show that the singular simplicial set $\sing(M)$ of $M$ deformation retracts onto the simplicial subset $\sing^{\mc T}(M)$ of smooth singular simplices that are transverse to every element of $\mc T$.
\end{abstract}
	\maketitle
	\tableofcontents
	\section{Introduction}

For a smooth manifold, smooth singular simplices suffice to compute homology \cite[Theorem~18.7]{Lee13}.
We show that further restricting to simplices transverse to a countable collection of manifolds with corners does so as well (\cref{C: Main Corollary}).
In fact, we prove the following more general statement at the level of simplicial sets.

\begin{theorem}\label{T: Main Theorem}
	Let $M$ be a smooth manifold without boundary and $\mc T$ a countable set of manifolds with corners with smooth maps to $M$.
	The singular simplicial set $\sing(M)$ deformation retracts onto the simplicial subset $\sing^{\mc T}(M)$ of smooth singular simplices that are transverse to every element of $\mc T$.
\end{theorem}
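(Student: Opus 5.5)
We construct the deformation retraction inductively over the skeleta of $\sing(M)$. The target is a simplicial map $r\colon \sing(M)\to\sing^{\mc T}(M)$ restricting to the identity on $\sing^{\mc T}(M)$, together with a simplicial homotopy $H\colon \sing(M)\times\Delta^1\to\sing(M)$ from $\id$ to the inclusion composed with $r$, relative to $\sing^{\mc T}(M)$. Since $\sing(M)$ is a Kan complex, giving such a homotopy amounts to choosing, for each nondegenerate simplex $\sigma\colon\Delta^n\to M$ not already in $\sing^{\mc T}(M)$, two pieces of data. The first is a transverse simplex $r(\sigma)$. The second is a homotopy $h_\sigma\colon\Delta^n\times[0,1]\to M$ from $\sigma$ to $r(\sigma)$. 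These choices must be compatible with the choices already made on the faces $d_i\sigma$, and degenerate simplices are handled by pulling back along degeneracy maps. So everything reduces to a relative extension problem. Given $\sigma$ and a homotopy $h_{\bd}$ on $\bd\Delta^n\times[0,1]$ ending at a map transverse on every face, we must extend $h_{\bd}$ to $\Delta^n\times[0,1]$ so that the time-$1$ end is smooth on all of $\Delta^n$ and transverse to every element of $\mc T$. Here transversality of a simplex means transversality of $\sigma$, restricted to each open face of $\Delta^n$, to each $T\in\mc T$, restricted to each stratum of its corner structure. This is presumably the paper's definition.

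The extension proceeds in three steps.

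\begin{enumerate}[label=(\roman*)]
\item \textbf{Initial extension.} Using a collar of $\bd\Delta^n$, extend $h_{\bd}$ to a homotopy from $\sigma$ to a continuous map $\sigma'$. On a neighborhood of $\bd\Delta^n$, $\sigma'$ is obtained from the face maps $r(d_i\sigma)$ by a standard collar construction.
\item \textbf{Smoothing.} Smooth $\sigma'$ relative to $\bd\Delta^n$ by Whitney approximation, obtaining a homotopy rel boundary to a smooth map. For this to work, the boundary data must already be smooth in a collar. We therefore strengthen the inductive hypothesis: each $r(\tau)$ is smooth and \emph{collared}, meaning that near each face it agrees with its restriction composed with a fixed retraction of a neighborhood of that face onto it. This makes the collar extension smooth, and it also makes transversality near the boundary automatic, since transversality of a product-like map reduces to transversality on the face.
\item \textbf{Transversality.} Perturb the smooth map rel a neighborhood of $\bd\Delta^n$ to make it transverse. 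Embed $M$ in $\R^N$ and use a tubular neighborhood retraction. Consider the family $F(x,v)=\pi\bigl(\sigma'(x)+\phi(x)v\bigr)$, with $v$ in a small ball of $\R^N$ and $\phi$ a bump function vanishing near $\bd\Delta^n$. This family is a submersion in $v$ on the interior. By the parametric transversality theorem, applied to each stratum of the interior of $\Delta^n$ against each stratum of each $T\in\mc T$, the set of parameters $v$ giving a transverse map is residual. Since $\mc T$ is countable and each manifold with corners has countably many strata, a countable intersection of residual sets is still residual, by Baire, hence nonempty. Choose $v$ in it, with $v$ small so that the straight-line homotopy in the tubular neighborhood stays in $M$.
\end{enumerate}

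Concatenating the homotopies from steps (i)--(iii) gives $h_\sigma$, and the time-$1$ end gives $r(\sigma)$. If $\sigma$ is already transverse, we take $h_\sigma$ constant. This is consistent with the induction only if transverse simplices have transverse faces, which holds by definition, but we also need the collared condition. We handle this by not requiring the collared property of the retraction itself: we work with the homotopy extension only up to reparametrizing, or we build the collars into $h_\sigma$ rather than into $r(\sigma)$.

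\textbf{Main obstacle.} I expect the main difficulty to be this compatibility at the boundary. We need the perturbation in step (iii) to be relative to the boundary while still achieving transversality all the way up to the boundary. Away from the collar the bump-function argument suffices. Inside the collar we cannot move the map, so we must arrange in step (i) that near $\bd\Delta^n$ the map factors through face data that are already transverse. For transverse simplices that are not collared, which we are forbidden to modify, this requires care. My first attempt would be to use a collaring that is the identity on $\bd\Delta^n$ and is only smooth-compatible to first order. I would then verify that transversality is an open condition near the boundary, so that it persists on a thin enough collar for the extended map. Compactness of $\Delta^n$ gives uniformity over finitely many faces, but not over the countably many $T$. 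For that I would argue one $T$ at a time, using the fact that transversality is open on compact subsets for each fixed $T$ and stratum, which may require the elements of $\mc T$ to map properly or a local finiteness argument.
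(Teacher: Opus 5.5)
Your overall architecture matches the paper's. You build natural homotopies $h_\sigma$ by induction over nondegenerate simplices and pull them back along degeneracies. Each inductive step is a homotopy extension, then a smoothing rel $\bd\gsimplex^n$, then a tubular-neighborhood submersion family with countable parametric transversality. However, the boundary problem you flag as the main obstacle is a genuine gap, and neither of your workarounds closes it.

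\textbf{Smoothing.} Since $r$ must fix $\sing^{\mc T}(M)$, consider any non-transverse $\sigma$ whose faces are all transverse. Then $r(\sigma)$ must be a smooth map on $\gsimplex^n$ whose faces are exactly the given, uncollared maps $d_i\sigma$. So the collared inductive hypothesis is unavailable. Moving the collars into $h_\sigma$ does not help either, because the endpoint $r(\sigma)$ is pinned on $\bd\gsimplex^n$. What you actually need is this: a map on $\bd\gsimplex^n$ that is smooth on each closed face is smooth on $\bd\gsimplex^n$ in Whitney's sense, i.e.\ locally the restriction of a smooth map on an open subset of $\R^n$. Only then does Whitney approximation rel $\bd\gsimplex^n$ apply. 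This is the content of \cref{L: smoothing}. Work in a corner chart $\R^n_k$, with $f_j$ smooth on the hyperplane $H_j$ and agreeing with $\sigma$ on $W_j$. Let $f^j_J$ be $f_j$ precomposed with the projection onto $\bigcap_{j'\in J}H_{j'}$. Then the inclusion--exclusion sum $F=-\sum_{\emptyset\neq J\subseteq\{1,\dots,k\}}(-1)^{|J|}f^{\min J}_J$ is smooth on $\R^n$ and restricts to $f_i$ on each $W_i$.

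\textbf{Transversality.} Perturbing rel a \emph{neighborhood} of $\bd\gsimplex^n$ leaves a collar on which transversality of the interior stratum has to come from openness. As you note yourself, that is not uniform over countably many $T$, and it would need properness of the maps $T\to M$, which is not assumed. The paper avoids this by taking $\rho$ to vanish exactly on $\bd\gsimplex^n$ and be strictly positive on $\mathring\gsimplex^n$ (e.g.\ the product of the barycentric coordinates). Then $(x,s)\mapsto\pi\bigl(\sigma(x)+\rho(x)\epsilon(\sigma(x))s\bigr)$ is a submersion on all of $\mathring\gsimplex^n\times\mathring D$. Hence for almost every $s$ the whole interior stratum is transverse to every $S^k(T)$ simultaneously, while $\bd\gsimplex^n$ stays fixed pointwise and the map stays smooth. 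Because transversality is defined stratumwise, the boundary strata are already covered by the inductive hypothesis. No collar, openness, or properness argument is needed.
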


Here transversality is understood stratumwise: if $X \to M$ and $Y \to M$ are smooth maps from manifolds with corners, they are transverse if the restrictions $S^k(X) \to M$ and $S^\ell(Y) \to M$ are transverse in the classical sense for all $k, \ell \geq 0$; see \cref{S: background}.

\cref{T: Main Theorem} does not extend to $\mc T$ of arbitrary cardinality.
For instance, if $\dim(M) > 0$ and $\mc T$ consists of all point inclusions of $M$, then no simplex can be transverse to every element of $\mc T$.

For $\mc T = \emptyset$, the theorem asserts that $\sing(M)$ deformation retracts onto the simplicial set of smooth singular simplices.

Passing to normalized or unnormalized chains gives the following.

\begin{corollary}\label{C: Main Corollary}
	The singular chain complex $C_*(M)$ of $M$ deformation retracts onto the subcomplex $C_*^{\mc T}(M)$ generated by smooth singular simplices that are transverse to every element of $\mc T$.
\end{corollary}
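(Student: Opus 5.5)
The corollary follows from \cref{T: Main Theorem} by applying a chain functor. I will use two facts about such functors: they carry simplicial homotopies to chain homotopies, and they carry simplicial subsets to subcomplexes.

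Write $i\colon \sing^{\mc T}(M)\hookrightarrow \sing(M)$ for the inclusion. By \cref{T: Main Theorem} there is a simplicial retraction $r\colon \sing(M)\to\sing^{\mc T}(M)$ with $ri=\id$. There is also a simplicial homotopy $h$ from $\id_{\sing(M)}$ to $ir$. I want $h$ to be a homotopy relative to $\sing^{\mc T}(M)$, i.e.\ constant on it. The deformation retraction produced by the theorem should be of this kind, or at least it can be taken to be so.

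I will present $h$ combinatorially, as a family of maps $h_j\colon \sing_n(M)\to\sing_{n+1}(M)$ for $0\le j\le n$ satisfying the standard simplicial homotopy identities. Alternatively, $h$ can be given as a map $\sing(M)\times\Delta[1]\to\sing(M)$.

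\textbf{Unnormalized chains.} Apply the free abelian group functor levelwise and take the alternating sum of face maps. This turns $i$ and $r$ into chain maps $i_*$ and $r_*$. The subcomplex $C_*^{\mc T}(M)$ is exactly the image of $i_*$, since it is spanned by the simplices of the simplicial subset. We have $r_*i_*=\id$.

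The alternating sum $H=\sum_j(-1)^j h_j$ satisfies
\[
\partial H+H\partial=\id-i_*r_*
\]
by the usual verification, as in the proof that simplicial homotopy implies chain homotopy. If $h$ is relative to the subset, then each $h_j$ sends a transverse simplex $\sigma$ to the degenerate simplex $s_j\sigma$. Hence $H$ preserves $C_*^{\mc T}(M)$, and $H$ can be arranged to vanish there in the normalized setting. This gives a strong deformation retraction of chain complexes.

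\textbf{Normalized chains.} The normalized complex is functorial in simplicial maps. Simplicial homotopies descend to chain homotopies on the normalized complex, because the $h_j$ commute appropriately with degeneracies and so preserve the degenerate subcomplex. Moreover, the normalized complex of a simplicial subset injects as a subcomplex, since a simplex of $\sing^{\mc T}(M)$ is degenerate there exactly when it is degenerate in $\sing(M)$. The same argument therefore applies verbatim.

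\textbf{Main point to check.} The only real issue is bookkeeping in the identity $\partial H+H\partial=\id-i_*r_*$ and its compatibility with the relative condition. This is standard, for instance via the Eilenberg--Zilber/Dold--Kan functoriality. I expect no genuine obstacle, since all the geometric content is in \cref{T: Main Theorem}.
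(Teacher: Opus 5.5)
Your proposal is correct and matches the paper, which proves the corollary in one line by passing the simplicial deformation retraction of \cref{T: Main Theorem} through the normalized or unnormalized chain functor. The relative condition you hedge on does hold for the paper's construction, since it takes $h_\sigma$ constant for $\sigma\in\sing^{\mc T}(M)$ and so $H$ restricts to the projection on $\simplex^1\times\sing^{\mc T}(M)$; it is not needed for the corollary as stated, only for your stronger conclusion.
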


An area of application of this corollary is the construction of cohomology classes via intersection.
In its most basic form we have the following.
Let \(W \subset M\) be a closed co-oriented smooth submanifold without boundary of codimension \(d\), and let \(\mc T = \{W\}\).
For a smooth singular \(d\)-simplex \(\sigma \colon \Delta^d \to M\) transverse to \(W\), define
\[
\iota_W(\sigma) =
\#\bigl(\Delta^d \times_M W\bigr)
\]
as the signed count of intersection points, and extend linearly to obtain
\[
\iota_W \in \Hom(C_d^{\mc T}(M),\Z).
\]
To see that \(\iota_W\) is a cocycle, let \(\tau \in C^{\mc T}_{d+1}(M)\).
The value of \(\iota_W\) on \(\partial \tau\) is the signed count of intersections of \(W\) with the restriction of \(\tau\) to the boundary of \(\Delta^{d+1}\).
This intersection is an oriented zero-manifold which is the boundary of the one-manifold \(\sigma^{-1}(W)\), and hence its total count is zero.

\begin{corollary}\label{C: intersection class}
	The cocycle \(\iota_W \in \Hom(C_d^{\mc T}(M),\Z)\) extends along a chain homotopy equivalence \(p_* \colon C_*(M) \to C_*^{\mc T}(M)\) to a singular cocycle \(p^*(\iota_W) \in C^d(M; \Z)\).
\end{corollary}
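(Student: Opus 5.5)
The corollary follows from \cref{C: Main Corollary} together with the cocycle computation preceding the statement. By \cref{C: Main Corollary}, the inclusion $i_*\colon C_*^{\mc T}(M)\to C_*(M)$ is part of a deformation retraction. So there is a chain map $p_*\colon C_*(M)\to C_*^{\mc T}(M)$ with $p_*\circ i_*=\id$ and a chain homotopy $H$ with $\bd H+H\bd=\id-i_*p_*$. In particular $p_*$ is a chain homotopy equivalence, with homotopy inverse $i_*$. I would define $p^*(\iota_W)\defeq \iota_W\circ p_*\in \Hom(C_d(M),\Z)=C^d(M;\Z)$.

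I would then check three things in order.

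\textbf{Extension.} Since $p_*i_*=\id$, the restriction satisfies $i^*p^*(\iota_W)=\iota_W\circ p_*\circ i_*=\iota_W$. Hence $p^*(\iota_W)$ genuinely extends $\iota_W$ from the subcomplex.

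\textbf{Cocycle.} This reduces to $\iota_W$ being a cocycle on $C_*^{\mc T}(M)$. For any $c\in C_{d+1}(M)$ we have $\delta p^*(\iota_W)(c)=\iota_W(p_*\bd c)=\iota_W(\bd p_* c)=0$, since $p_*$ is a chain map.

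\textbf{Vanishing on boundaries.} The key input is that $\iota_W(\bd\tau)=0$ for every transverse $(d+1)$-simplex $\tau$. I would make rigorous the sketch given above. Transversality is stratumwise, so $\tau$ restricted to each open face of $\Delta^{d+1}$ is transverse to $W$. Then $\tau^{-1}(W)$ is a compact $1$-manifold with boundary. Its boundary is exactly $\tau^{-1}(W)\cap\bd\Delta^{d+1}$, and this lies in the interiors of the codimension-one faces: by dimension count, transversality on the codimension-$\ge 2$ strata makes their preimage empty. The boundary points of a compact oriented $1$-manifold cancel in signed count.

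The main obstacle is signs. The induced orientations on the boundary points, computed using the co-orientation of $W$ and the boundary orientation of the $1$-manifold, must match the signs $(-1)^j$ in $\bd\tau=\sum_j(-1)^j\tau\circ\delta_j$. I would fix a convention for $\#(\Delta^d\times_M W)$, namely that a point counts $+1$ if $T\Delta^d$ maps isomorphically onto the normal bundle of $W$ with its co-orientation. I would then verify that the face orientation of $\delta_j$ differs from the outward-normal-first boundary orientation of $\Delta^{d+1}$ by exactly $(-1)^j$. Once this holds, $\iota_W(\bd\tau)$ equals, up to a global sign, the signed count of $\bd\bigl(\tau^{-1}(W)\bigr)$, which is zero.

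Finally, the class $[p^*(\iota_W)]$ does not depend on the choice of retraction. Any two choices $p_*,p'_*$ are both homotopy inverse to $i_*$, so they are chain homotopic, and the resulting cocycles differ by a coboundary.
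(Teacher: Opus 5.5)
Your proposal is correct and follows the paper's argument. The paper likewise gets the cocycle property of $\iota_W$ from the fact that the intersection points on $\bd\Delta^{d+1}$ form the boundary of the $1$-manifold $\tau^{-1}(W)$, and then precomposes with the retraction $p_*$ from \cref{C: Main Corollary}; your explicit checks (extension via $p_*\circ i_*=\id$, the dimension count excluding codimension-$\ge 2$ strata, the $(-1)^j$ orientation bookkeeping, and independence from the choice of $p_*$) fill in details the paper leaves implicit.
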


The resulting cohomology class, coinciding when \(M\) is compact and oriented with the Poincaré dual of the fundamental class of \(W\), is traditionally obtained by pulling back the Thom class of a tubular neighborhood of \(W\) \cite[VI.11]{Bredon1993}.
Our present approach realizes this construction directly at the cochain level, requiring no orientability or compactness from the target.
This approach fits naturally into the framework of geometric cohomology \cite{FMS-foundations}.
Indeed, let \(\mc T\) be a countable collection of proper co-oriented maps to \(M\) from manifolds with corners, and let \(C^*_{\Gamma, \mc T}(M)\) be the geometric cochain subcomplex they generate \cite{FMS-foundations}.
\cref{C: Main Corollary} yields a chain homotopy inverse \(p_*\), and hence a cochain map to singular cochains
\begin{equation*}
	C^*_{\Gamma, \mc T}(M) \xr{\phi} \Hom(C_*^{\mc T}(M),\Z) \xr{p^*} \Hom(C_*(M),\Z)
\end{equation*}
where \(\phi\) is evaluation on transverse chains \cite[Theorem 7.3, Definition
6.11, and Proposition 3.77]{FMS-foundations}.
In particular, the value of \(f \colon X \to M\) on a simplex \(\sigma\) of complementary dimension is the signed count of \(f \times_M p(\sigma)\).

\medskip
While our applications are in geometry and topology, this result was also motivated by questions in arithmetic.
We thank K.~Kallal and A.~Venkatesh for bringing this to our attention, as a need for \cref{T: Main Theorem} arises in their approach to group cohomology and theta functions.

	\section{Preliminaries}\label{S: background}

\subsection{Manifolds with corners}

Let $X$ be an $n$-dimensional \defn{manifold with corners} as defined in Joyce \cite{Joy12}.
Specifically, $X$ possesses an atlas of smoothly compatible local charts consisting of pairs $(U, \phi)$, where $U$ is an open subset of $\R^n_k \defeq [0,\infty)^k \times \R^{n-k}$ for some $0 \leq k \leq n$ and $\phi \colon U \to X$ is a homeomorphism onto its image.

A map $f \colon X \to Y$ between manifolds with corners is a \defn{smooth map} if for every pair of charts $(U, \phi)$ on $X$ and $(V,\psi)$ on $Y$, the composition
\[
\psi^{-1} \circ f \circ \phi \colon U \to V
\]
extends to a smooth map between open subsets of Euclidean spaces.
When the codomain has no boundary, this definition coincides with both the notions of \textit{smooth} and \textit{weakly smooth} maps in \cite{Joy12}.

For $k \geq 0$, the \defn{stratum} $S^k(X)$ is the set of points of depth $k$, that is, those points $x \in X$ for which, in some (and hence any) chart $(U, \phi)$ with $\phi(u) = x$, exactly $k$ of the coordinates of $u \in \R^n_k$ are zero.
Thus $S^0(X)$ is the interior of $X$, and if $X$ is a manifold with boundary then $S^1(X)$ is its boundary.

The \defn{geometric $n$-simplex} $\gsimplex^n$ is the convex hull of $\vec 0$ and the standard basis of $\R^n$.
It is a manifold with corners whose stratum $S^k(\gsimplex^n)$ consists of points lying in exactly $k$ boundary hyperplanes.
More explicitly, $S^k(\gsimplex^n)$ is the disjoint union of the relative interiors of all codimension $k$ faces of $\gsimplex^n$.
For example,
\[
S^0(\gsimplex^n) = \mathring \gsimplex^n \quad\text{and}\quad
S^1(\gsimplex^n) = \coprod_i \delta_i(\mathring \gsimplex^{n-1}),
\]
where $\delta_i \colon \gsimplex^{n-1} \to \gsimplex^n$ is the $i^\th$ face inclusion.

Let $M$ be a manifold without boundary.
We say that smooth maps $f \colon X \to M$ and $g \colon Y \to M$ are \defn{transverse} if for all $k,\ell \geq 0$ the map
\[
f|_{S^k(X)} \times g|_{S^\ell(Y)} \colon S^k(X) \times S^\ell(Y) \to M \times M
\]
is transverse in the classical sense to the diagonal $\diag(M) \subset M \times M$, i.e.,
\[
D(f \times g)\bigl(T_xS^k(X) \times T_yS^\ell(Y)\bigr) + T_{(z,z)}\diag(M) = T_{(z,z)}(M \times M)
\]
whenever $f(x) = g(y) = z$ or, equivalently,
\[
Df T_xS^k(X) + Dg T_yS^\ell(Y) = T_zM,
\]
see \cite[Lemma 3.69]{FMS-foundations}.

\medskip
Throughout the text we fix a smooth manifold without boundary $M$ and a countable set $\mc T$ of manifolds with corners each equipped with a smooth map to $M$.

We say that a smooth map to $M$ is \defn{$\mc T$-transverse} if it is transverse to every element in $\mc T$.

\subsection{Simplicial sets}

The \defn{simplex category} is denoted by  \(\simplex\), its objects are the posets \([n] = \set{0 < \dots < n}\) and its morphisms are all (weakly) order-preserving maps \([m] \to [n]\).

A \defn{simplicial set} is a functor \(\simplex^\op \to \Set\), and a \defn{simplicial map} is a natural transformation \cite{GBF-SS}.
We denote this category by \(\sSet\) and consider it equipped with the cartesian product.

The \(n^\th\) \defn{representable simplicial set} \(\simplex^n\) is the functor determined by \([m] \mapsto \simplex\big([m], [n]\big)\).

A \defn{simplicial homotopy} between $f, g \colon X \to Y$ is a simplicial map
\[
H \colon \simplex^1 \times X \to Y
\]
restricting to \(f\) and \(g\) on the endpoints of \(\simplex^1\).

The \defn{singular simplicial set} \(\sing(\mc X)\) of a topological space \(\mc X\) is defined by
\[
[n] \mapsto \sing_n(\mc X) \coloneqq \set{\sigma \colon \gsimplex^n \to \mc X \mid \sigma \text{ continuous}},
\]
on objects, and on morphisms \(\alpha \colon [m] \to [n]\) by
\[
\sing_n(\mc X) \ni \sigma \mapsto \sigma \circ \alpha_*\,,
\]
where \(\alpha_* \colon \gsimplex^m \to \gsimplex^n\) is the canonical affine map induced by \(\alpha\).

For a smooth manifold \(M\), we define the \defn{$\mc T$-transverse singular simplicial set} \(\sing^{\mc T}(M) \subset \sing(M)\) to be the simplicial subset consisting of all smooth \(\mc T\)-transverse simplices.
To see that this is indeed a simplicial subset, note first that transversality is preserved under restriction to faces, so the collection is closed under face maps.
For degeneracies, recall that for any surjection \(\alpha \colon [m] \to [n]\), the induced map \(\alpha_* \colon \gsimplex^m \to \gsimplex^n\) is affine and restricts to a submersion on every face.
It follows that if \(\sigma \colon \gsimplex^n \to M\) is \(\mc T\)-transverse, then \(\sigma \circ \alpha_*\) is too.
	\section{Smoothness}\label{s:smoothness}

\begin{lemma}\label{L: smoothing}
	If $\sigma \colon \gsimplex^n \to M$ is a singular simplex whose restriction to each proper face is smooth, then $\sigma$ is homotopic relative to $\bd \gsimplex^n$ to a smooth singular simplex.
\end{lemma}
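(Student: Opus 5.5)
Here $\sigma$ is smooth on each proper face means $\sigma|_{\bd\gsimplex^n}$ is smooth on every face, in the manifolds-with-corners sense. The plan is to keep $\sigma$ unchanged near the boundary after first making it smooth in a collar, then smooth the interior by a standard approximation, and finally interpolate through a geodesic (convex) homotopy.

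\emph{Step 1: a boundary-smooth extension.} Fix a Riemannian metric on $M$ and embed $M$ properly in some $\R^N$, with a tubular neighborhood $U$ and smooth retraction $r\colon U\to M$. The restriction $\sigma|_{\bd\gsimplex^n}$ is smooth on each closed face. I would first extend it to a smooth map $g$ defined on a neighborhood of $\bd\gsimplex^n$ in $\gsimplex^n$ that agrees with $\sigma$ on $\bd\gsimplex^n$. To do this, extend $\sigma|_{\bd\gsimplex^n}$ into $\R^N$ smoothly using a Whitney-type extension. One concrete route is to compose with a smooth ``radial'' collar: choose a smooth map $\rho$ from a neighborhood of $\bd\gsimplex^n$ onto $\bd\gsimplex^n$ restricting to the identity on the boundary. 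The gluing of the face-wise smooth maps has to be smooth near the corners, and this is the delicate point. I expect this to be the main obstacle, because a map that is smooth on each face separately need not be smooth as a map of the union of faces. One fix is to build the extension inductively over skeleta, using Seeley/Whitney extension on each face and adding corrections that vanish on lower faces. Alternatively, one can observe that only a \emph{homotopy rel boundary to a smooth map} is needed, and smoothness on $\gsimplex^n$ is only required in the chart sense near interior points and along faces. If the paper's notion of smoothness of $\sigma$ is only face-wise, I would adopt that.

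\emph{Step 2: smoothing.} Take the extension in $\R^N$ and use a partition of unity to combine $g$ near the boundary with a convolution-smoothed version of $\sigma$ in the interior. By uniform continuity, the smoothed map is $C^0$-close to $\sigma$ and lands in $U$, and it equals $g$ near $\bd\gsimplex^n$. Composing with $r$ gives a smooth map $\sigma'\colon\gsimplex^n\to M$ that is close to $\sigma$ and agrees with $\sigma$ on $\bd\gsimplex^n$. The closeness holds provided $g$ is close to $\sigma$ near the boundary, which can be arranged by shrinking the collar using continuity.

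\emph{Step 3: homotopy.} The homotopy
\[
H(x,t)=r\bigl((1-t)\sigma(x)+t\sigma'(x)\bigr)
\]
is well defined because the straight segment between $\sigma(x)$ and $\sigma'(x)$ stays in $U$ once the two maps are sufficiently close. It is stationary on $\bd\gsimplex^n$, since $\sigma=\sigma'$ there. This gives the required homotopy rel $\bd\gsimplex^n$ from $\sigma$ to the smooth simplex $\sigma'$.
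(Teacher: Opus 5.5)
\textbf{There is a genuine gap: Step 1 is flagged as the delicate point but never carried out.} Your Steps 2 and 3 amount to a hand-made relative Whitney approximation and are fine. But Step 1 is the entire content of the lemma. The smooth simplex $\sigma'$ must agree with $\sigma$ on $\bd\gsimplex^n$ and be smooth as a map of the manifold with corners $\gsimplex^n$. So you must show that $\sigma|_{\bd\gsimplex^n}$ is, near every boundary point and in particular every corner, the restriction of a smooth map on an open subset of $\R^n$. Face-wise smoothness gives this only one face at a time, and none of your three suggestions delivers it:
\begin{itemize}
\item The radial collar cannot exist. For $n\geq 2$, a smooth $\rho$ from a neighborhood of $\bd\gsimplex^n$ onto $\bd\gsimplex^n$ that fixes $\bd\gsimplex^n$ fixes the $n$ edges through a vertex. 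Its derivative there is therefore the identity, so $\rho$ is a local diffeomorphism, and its image cannot lie in the nowhere-dense set $\bd\gsimplex^n$.
\item Reading ``smooth'' face-wise proves a weaker statement than the lemma. The paper later needs the strong notion, e.g.\ to extend $\sigma$ smoothly to an open $\mc U\supset\gsimplex^n$ in the transversality lemma.
\item The inductive ``corrections vanishing on lower faces'' is the right instinct, but this is exactly where an argument is needed. A correction built from data on $H_2$ vanishes only on $W_1\cap W_2$, not on all of $H_1\cap H_2$. Extending it off $H_2$ can then destroy agreement on $W_1$, and you must say how to prevent that.
\end{itemize}

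\textbf{The missing idea is an explicit corner extension by inclusion--exclusion, which is what the paper does.} Take a chart sending a neighborhood of a boundary point to $\R^n_k$, and put $H_j=\{x_j=0\}$ and $W_j=H_j\cap\R^n_k$ for $1\leq j\leq k$. Face-wise smoothness gives, one coordinate at a time in a chart of $M$, smooth maps $f_j\colon H_j\to\R$ agreeing with $\sigma$ on $W_j$. Let $\pi_J$ be the orthogonal projection onto $\bigcap_{j\in J}H_j$, and set
\[
F=\sum_{\emptyset\neq J\subseteq\{1,\dots,k\}}(-1)^{|J|+1}\, f_{\min J}\circ\pi_J ,
\]
which is smooth on $\R^n$. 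For $x\in W_i$ we have $\pi_J(x)=\pi_{J\cup\{i\}}(x)\in\bigcap_{j\in J\cup\{i\}}W_j$, where all the relevant $f_j$ agree with $\sigma$. Hence for $J\neq\emptyset$ with $i\notin J$, the terms indexed by $J$ and $J\cup\{i\}$ cancel. Only $J=\{i\}$ survives, giving $F|_{W_i}=f_i$.

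Your inductive scheme also works if each correction $g$ on $H_2$ is replaced by $g-g\circ\pi_{\{1,2\}}$, and similarly at later stages. Unwinding it yields a formula of the same inclusion--exclusion type. Once you have these local extensions, you can either patch them with a partition of unity in $\R^N$ and run your Steps 2--3, or, as the paper does, extend $\sigma$ to a neighborhood of $\gsimplex^n$ by a collapse map and quote relative Whitney approximation.
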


\begin{proof}
	Consider a singular simplex $\sigma \colon \gsimplex^n \to M$ whose restriction to each codimension $1$ face is smooth.
	We want to construct a homotopy relative to $\bd \gsimplex^n$ from $\sigma$ to a smooth singular simplex.

	Recall that $\gsimplex^n$ is the convex hull in $\R^n$ of the origin and the standard basis vectors.
	We extend $\sigma$ to an open neighborhood $\mc U$ of $\gsimplex^n$ in $\R^n$ by composing with a collapse map from an open regular neighborhood of $\gsimplex^n$ onto $\gsimplex^n$, and continue to denote this extension by $\sigma$.
	It suffices to verify that $\sigma$ is smooth on $\bd \gsimplex^n$ in the sense that for each $x \in \bd \gsimplex^n$ there exists a smooth map defined on a neighborhood of $x$ in $\R^n$ agreeing with $\sigma$ on $\bd \gsimplex^n$.
	Under this assumption, since $\mc U$ is a manifold, the Whitney Approximation Theorem \cite[Theorem 6.26]{Lee13} yields a homotopy rel $\bd \gsimplex^n$ from $\sigma$ to a smooth map $\sigma' \colon \mc U \to M$ (see also \cite[page 45]{Lee13}).

	Suppose $z \in \bd \gsimplex^n$.
	Then $z$ has an open neighborhood $N$ in $\R^n$ that is mapped to a neighborhood of the origin in $\R^n$ by an affine map taking $z$ to the origin and $N \cap \gsimplex^n$ to an open subset of $\R^n_k \defeq [0, \infty)^k \times \R^{n-k}$ for some $k$, $1 \leq k \leq n$; note that $k \neq 0$ as $z \in \bd \gsimplex^n$.
	By shrinking $N$ and applying elementary diffeomorphisms $(-1,1) \to \R$ in each factor, we may thus obtain a diffeomorphism $\phi \colon \R^n \to N$ that takes the origin to $z$ and $\R^n_k$ to $N \cap \gsimplex^n$.
	For $1 \leq j \leq k$, let $H_j = \{(x_1, \dots, x_n) \in \R^n \mid x_j = 0\}$ and $W_j = H_j \cap \R^n_k$.
	Then $\phi$ takes each $W_j$ to the intersection of $N$ with a codimension $1$ face of $\gsimplex^n$.
	In particular, $(\R^n_k, \phi)$ is a smooth chart for $\gsimplex^n$, and the further restriction to each $W_j$ is a chart for a codimension $1$ face of $\gsimplex^n$.

	By assumption, $\sigma$ is smooth on each codimension $1$ face of $\gsimplex^n$.
	This means that the restriction of $\sigma \circ \phi$ to each $W_j$ extends to a smooth function on a neighborhood $U_j$ of the origin in the hyperplane $H_j$, $1 \leq j \leq k$.
	Let us denote by $\tilde \sigma_j$ the resulting smooth function $H_j \supset U_j \xr{\tilde \sigma_j} M$.
	This means that for any Euclidean chart $(\R^m,\psi)$ of $M$ such that $\psi(\R^m)$ is a neighborhood of $\sigma(z)$ in $M$, each composition $\psi^{-1}\tilde \sigma_j \colon U_j \to \R^m$ is smooth.
	Without loss of generality we will suppose $U_j = H_j$.

	Summarizing, we now have a map $f \defeq \psi^{-1}\sigma \phi \colon \R^n_k \to \R^m$ that represents $\sigma$ in local charts.
	Furthermore, for each $1 \leq j \leq k$, we have a \emph{smooth} map $f_j \defeq \psi^{-1} \tilde \sigma_j \colon H_j \to \R^m$ whose restriction to $W_j$ agrees with $f$ there.
	It suffices now to demonstrate that the restriction of $f$ to $W_1 \cup \dotsb \cup W_k$ is smooth, for which it further suffices to find a smooth map $F \colon \R^n \to \R^m$ such that $F$ restricts to $f_j$ on each $W_j$.
	As a map to $\R^m$ is smooth if and only if the map to each coordinate is smooth, without loss of generality we may take $m = 1$ in the remainder of the argument.

	Now consider a non-empty set $J = \{j_1 < j_2 < \cdots < j_r\} \subseteq \{1,\dots,k\}$ and $j \in J$.
	Let $f^j_J \colon \R^n \to \R$ denote the composition
	\[
	f^j_J \colon \R^n \to H_{j_1} \cap\dotsb\cap H_{j_r} \xrightarrow{f_{j}} \R
	\]
	where the first map is the canonical projection.
	Observe that the restriction of $f^j_J$ to $\R^n_k$ is independent of $j \in J$ as the $f_j$ all agree with $f$ on $W_{j_1} \cap\dotsb\cap W_{j_r}$.
	In this context we simplify notation from $f^j_J$ to $f_J$.

	We claim that the following function $F \colon \R^n \to \R$ has the desired properties:
	\[
	F = \ -\sum_{\ \mathclap{\substack{J \subseteq \{1,\dots,k\} \\ J \neq \emptyset}}} \ (-1)^{|J|}f^{\min J}_{J}.
	\]
	This function is clearly smooth as a sum of smooth functions, so it only remains to show that the restriction to each $W_i = H_i \cap \R^n_k$ agrees with $f_i$ in that domain.

	Restricting $F$ to $W_i$ implies the restriction of each summand $f_J^{\min J}$ to $\R^n_k$, making them independent of their superscript.
	Therefore,
	\begin{align*}
	F|_{W_i}
	& = -\sum_{\ \mathclap{\substack{J \subseteq \{1,\dots,k\} \\ J \neq \emptyset}}} (-1)^{|J|} f_{J \cup \{i\}} \\
	& =
	-\sum_{\mathclap{\substack{J \subseteq \{1,\dots,k\} \\ i \notin J,\, J \neq \emptyset}}} (-1)^{|J|} f_{J \cup \{i\}}
	-\sum_{\mathclap{\substack{J \subseteq \{1,\dots,k\} \\ i \in J}}} (-1)^{|J|} f_J \\
	& =
	-\sum_{\mathclap{\substack{J \subseteq \{1,\dots,k\} \\ i \notin J,\, J \neq \emptyset}}} (-1)^{|J|} f_{J \cup \{i\}}
	+f_i
	-\sum_{\mathclap{\substack{L \subseteq \{1,\dots,k\} \\ i \notin L,\, L \neq \emptyset}}} (-1)^{|L|+1} f_{L \cup \{i\}} \\
	& = f_i. \qedhere
	\end{align*}
\end{proof}
	\section{Transversality}\label{s:transversality}

\begin{lemma}\label{L: transversality}
	If $\sigma \colon \gsimplex^n \to M$ is a smooth singular simplex whose restriction to each proper face is transverse to all $T \in \mc T$, then $\sigma$ is homotopic relative to $\bd \gsimplex^n$ to a smooth singular simplex that is transverse to all $T \in \mc T$.
\end{lemma}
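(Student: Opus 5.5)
The plan is a parametric transversality argument in which the perturbation is cut off so that it vanishes on $\bd \gsimplex^n$. Since $M$ has no boundary, I would first choose finitely many smooth vector fields $V_1,\dots,V_N$ on $M$ that span $T_zM$ at every point. For a general manifold I would use an embedding $M\subset\R^N$ with orthogonal projection, or equivalently a tubular neighborhood retraction $r\colon \mc N\to M$. This gives a smooth map $\Phi\colon M\times B\to M$, with $B\subset\R^N$ a small ball, such that $\Phi(z,0)=z$ and $\Phi(z,\cdot)$ is a submersion at $0$. Shrinking $B$ compactly over the compact image of $\sigma$ makes it a submersion on all of $B$ near that image.

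Next I would choose a smooth function $\rho\colon \gsimplex^n\to[0,1]$ that vanishes exactly on $\bd\gsimplex^n$ and is positive on the interior, for instance the product of the barycentric coordinates. I then set
\[
G\colon \gsimplex^n\times B\to M,\qquad G(x,b)=\Phi\bigl(\sigma(x),\rho(x)b\bigr).
\]
The path $t\mapsto G(x,tb)$ is a homotopy rel $\bd\gsimplex^n$ from $\sigma$ to $\sigma_b\defeq G(\cdot,b)$, so it remains to find $b$ with $\sigma_b$ $\mc T$-transverse.

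The key step is to check, for each $T\in\mc T$ with map $g\colon T\to M$, that $G$ restricted to $S^k(\gsimplex^n)\times B$ is transverse to every stratum $g|_{S^\ell(T)}$. On the interior stratum $k=0$, $\rho>0$, so $G$ is already a submersion in the $b$ direction. On boundary strata, $G(x,b)=\sigma(x)$ for every $b$, so I need to add that $\sigma|_{\bd}$ is transverse by hypothesis. This is where the argument needs care. Transversality of $\sigma_b$ on a boundary stratum $S^k$ for $k\ge1$ is immediate, because $\sigma_b=\sigma$ there and the hypothesis applies to proper faces: a point of $S^k(\gsimplex^n)$ lies in the interior stratum of some proper face, whose strata coincide. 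So only the interior stratum $\mathring\gsimplex^n$ has to be handled. There I apply the parametric transversality theorem to $G|_{\mathring\gsimplex^n\times B}$, which is a submersion and hence transverse to each $g|_{S^\ell(T)}\colon S^\ell(T)\to M$. These are ordinary manifolds without boundary, second countable, and there are countably many strata. The theorem then says that for almost every $b\in B$ the map $\sigma_b|_{\mathring\gsimplex^n}$ is transverse to $g|_{S^\ell(T)}$. The usual proof applies Sard to the projection of the fiber product $(\mathring\gsimplex^n\times B)\times_M S^\ell(T)\to B$, a smooth manifold because of the submersion.

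Since $\mc T$ is countable and each $T$ has countably many strata (finitely many depths, each countably many components), the exceptional set of $b$ is a countable union of measure-zero sets. Hence some $b\in B$ works simultaneously for all of them.

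The main obstacle I expect is the compatibility between the interior and the boundary. Transversality is only a stratumwise condition, so I do not need any uniformity near $\bd\gsimplex^n$, but I must confirm this matches the definition used: transversality of $\sigma_b$ means transversality on each $S^k(\gsimplex^n)$ separately, and this is exactly what the hypothesis on proper faces together with the interior argument delivers. I also need $\sigma_b$ to be smooth up to the boundary as a map of manifolds with corners. This holds because $G$ is a composite of smooth maps defined on a neighborhood of $\gsimplex^n\times B$, using the smooth extension of $\sigma$ and of $\rho$ as a polynomial.
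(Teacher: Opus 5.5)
Your proposal is correct and follows essentially the same route as the paper. Both embed $M$ in $\R^N$, perturb $\sigma$ by $\rho(x)\,b$ through the tubular-neighborhood projection with a cutoff $\rho$ vanishing exactly on $\bd\gsimplex^n$, use that the family is a submersion over $\mathring\gsimplex^n$ to get transversality for almost every parameter, intersect over the countably many strata $S^\ell(T)$, and handle the boundary strata by the hypothesis. The only differences are cosmetic: the paper scales the perturbation by $\epsilon(\sigma(x))$ rather than shrinking the ball by compactness, and it applies the Guillemin--Pollack Transversality Theorem to $F \times g_T^k$ against the diagonal rather than Sard on the fiber product.
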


\begin{proof}
	Suppose $M$ is a smooth manifold without boundary and that $\mc T$ is a countable set of manifolds with corners mapping smoothly to $M$.
	If $\sigma \colon \gsimplex^n \to M$ is a smooth singular simplex whose restriction to each codimension $1$ face is transverse to $S^k(T)$ for all $k \geq 0$ and $T \in \mc T$, we want to construct a homotopy relative to $\bd \gsimplex^n$ from $\sigma$ to a smooth singular simplex $\sigma'$ with $\sigma'$ transverse to all $S^k(T)$.
	Since $S^j(\gsimplex^n)$ satisfies these conditions for all $j>0$ by assumption, it suffices to construct a homotopy relative to $\bd \gsimplex^n$ that moves $S^0(\gsimplex^n) = \mathring \gsimplex^n$ into transverse position with respect to all $S^k(T)$.

	We may assume that $M$ is embedded in some Euclidean space $\R^N$ by the Whitney Embedding Theorem~\cite[page 53]{GP74} and hence admits a tubular neighborhood $M^\epsilon \subset \R^N$, for some smooth positive function $\epsilon \colon M \to (0,\infty)$.
	Then, the nearest-point projection $\pi \colon M^\epsilon \to M$ is a smooth submersion; see \cite[Section 2.3]{GP74} and \cite[Theorem 2.17 and Remark 2.18]{FMS-foundations}.

	By the Whitney Approximation Theorem \cite[Theorem 6.26]{Lee13}, we extend $\sigma$ to be smooth in an open neighborhood $\mathcal U$ of $\gsimplex^n$.
	We now proceed by modifying the proof of the Transversality Homotopy Theorem in \cite[page 70]{GP74}.

	Let $\rho \colon \mc U \to [-1,1]$ be a smooth function satisfying $\rho = 0$ on $\bd \gsimplex^n$ and $\rho > 0$ on $\mathring{\gsimplex}^n$.

	Let $\mathring D$ be the open unit ball in $\R^N$, and define
	\[
	f \colon \mc U \times \mathring D \to \R^N,
	\qquad
	f(x,s) = \sigma(x)+\rho(x)\epsilon(\sigma(x))s.
	\]
	If $x \in \bd \gsimplex^n$, then $\rho(x) = 0$, so $f(x,s) = \sigma(x)$.
	If $x \in \mathring\gsimplex^n$, then $\rho(x)>0$, and the differential in the $\mathring D$-direction is given by multiplication by the nonzero scalar $\rho(x)\epsilon(\sigma(x))$, so $f$ is a submersion at $(x,s)$.
	Moreover,
	\[
	|f(x,s) - \sigma(x)| = \rho(x)\epsilon(\sigma(x))|s| < \epsilon(\sigma(x)),
	\]
	and therefore $f(x,s)\in M^\epsilon$.
	Hence the composition
	\[
	F = \pi\circ f \colon \mc U \times \mathring D \to M
	\]
	is well defined and smooth, and its restriction to $\mathring\gsimplex^n \times \mathring D$ is a submersion.

	Fix $(T, g_T \colon T \to M) \in \mc T$ and let $g_T^k = g_T|_{S^k(T)}$.
	We consider $F \times g_T^k \colon \mc U \times \mathring D \times S^k(T) \to M \times M$.
	Suppose $(x,s,y) \in \mathring \gsimplex^n \times \mathring D \times S^k(T)$ is such that $(F \times g_T^k)(x,s,y) = (z,z) \in \diag(M)$.
	Since $F$ is a submersion on $\mathring \gsimplex^n \times \mathring D$, the image $\operatorname{D}(F \times g_T^k)T_{(x,s,y)}(\mathring \gsimplex^n \times \mathring D \times S^k(T))$ of the tangent space at $(x, s, y)$ includes all of $T_z M \times \{0\}$.
	As this subspace is complementary to the diagonal subspace of $T_zM \times T_zM$, we see that $F \times g_T^k$ is transverse to $\diag(M)$ over $\mathring \gsimplex^n \times \mathring D \times S^k(T)$.
	Thus, by the Transversality Theorem of \cite[page 68]{GP74}, for almost every $s \in \mathring D$, the map $F(\cdot, s) \times g_T^k \colon \mathring \gsimplex^n \times S^k(T) \to M \times M$ is transverse to $\diag(M)$.

	Since $\mc T$ is countable and each $T \in \mc T$ is finite-dimensional, the collection of spaces $S^k(T)$, as $T$ ranges over $\mc T$ and $k$ over $\mathbb{N}$, is countable.
	Since the union of a countable number of sets of measure zero has measure zero, almost every $s \in \mathring D$ suffices for $F(\cdot, s) \times g_T^k$ to be transverse to $\diag(M)$ for all $T \in \mc T$ and all $k \geq 0$.

	Choosing such an $s$, let $\sigma'(x) = F(x,s)$ for all $x \in \gsimplex^n$.
	Then, as required, $\sigma'$ is smooth, as it extends to the smooth $F(\cdot,s)$ on all of $\mc U$, it agrees with $\sigma$ on $\bd \gsimplex^n$ (and so by assumption its restriction to each open face of $\bd \gsimplex^n$ is transverse to all $S^k(T)$), and its restriction to the interior of $\gsimplex^n$ is transverse to all $S^k(T)$.
	Finally, $\sigma'$ is homotopic rel $\bd \gsimplex^n$ to $\sigma$ via the homotopy $F(\cdot,ts)$, noting that $F(\cdot,0) = \sigma$.
\end{proof}
	\section{Proof of \cref{T: Main Theorem}}\label{s:main}

Let $i \colon \sing^{\mc T}(M) \to \sing(M)$ be the canonical inclusion.
To prove \cref{T: Main Theorem} we must construct a simplicial map
\[
p \colon \sing(M) \to \sing^{\mc T}(M)
\]
satisfying
\[
p \circ i = \id_{\sing^{\mc T}(M)},
\]
and a simplicial homotopy
\[
H \colon \simplex^1 \times \sing(M) \to \sing(M)
\]
between $\id_{\sing(M)}$ and the composite $i \circ p$.

Assume for the moment that, for every singular simplex \(\sigma \colon \gsimplex^n \to M\), we are given a homotopy
\[
h_\sigma \colon \gsimplex^1 \times \gsimplex^n \to M
\]
from \(\sigma\) to a simplex \(p(\sigma) \in \sing^{\mc T}(M)\) such that, for every morphism \(\beta \colon [m] \to [n]\) in \(\simplex\),
\[
h_{\sigma \circ |\beta|} = h_\sigma \circ (\id_{\gsimplex^1} \times |\beta|).
\]
For \(\beta = \id_{[n]}\), this condition is vacuous.
For face inclusions, it says that the homotopies on the faces of \(\sigma\) are the restrictions of the homotopy of \(\sigma\).
For degeneracy maps, it says that the homotopy of a degenerate simplex is obtained by precomposing the homotopy with the corresponding simplex collapse.
We assume moreover that if $\sigma \in \sing^{\mc T}(M)$ then $h_\sigma$ is constant.

The simplicial map
\[
p \colon \sing(M) \to \sing^{\mc T}(M)
\]
is defined by the assignment
\[
(\sigma \colon \gsimplex^n \to M) \mapsto \big(p(\sigma) \colon \gsimplex^n = \gsimplex^0 \times \gsimplex^n \xr{\delta_1 \times \id} \gsimplex^1 \times \gsimplex^n \xr{h_\sigma} M\big).
\]

Indeed, for every $\beta \colon [m] \to [n]$,
\[
p(\sigma \circ |\beta|) = h_{\sigma \circ |\beta|} \mid_{\{1\} \times \gsimplex^m}
= h_\sigma \circ (\id_{\gsimplex^1} \times |\beta|)\mid_{\{1\} \times \gsimplex^m}
= p(\sigma) \circ |\beta|.
\]
Moreover, for every $\sigma \in \sing^{\mc T}(M)$ we have
\[
p(i(\sigma)) = p(\sigma) = \sigma.
\]

We now define the simplicial homotopy
\[
H \colon \simplex^1 \times \sing(M) \to \sing(M).
\]
An $n$-simplex of $\simplex^1 \times \sing(M)$ is a pair $\alpha \times \sigma$, where $\alpha \colon \simplex^n \to \simplex^1$ is a simplicial map and $\sigma \colon \gsimplex^n \to M$ is a singular simplex.
We set
\[
H(\alpha \times \sigma) \coloneqq
\bigl(
\gsimplex^n \xrightarrow{\mathrm{diag}\,}
\gsimplex^n \times \gsimplex^n \xr{(|\alpha| \times \id)} \gsimplex^1 \times \gsimplex^n \xr{h_\sigma} M
\bigr).
\]
Then, since each $h_\sigma$ begins at $\sigma$ and ends at $p(\sigma)$,
\[
H \circ (\delta_0 \times \id_{\sing(M)}) = \id_{\sing(M)}
\qquad\text{and}\qquad
H \circ (\delta_1 \times \id_{\sing(M)}) = i \circ p,
\]
where we are using the identification \(\gsimplex^0 \times \gsimplex^n = \gsimplex^n\) implicitly.

To verify that $H$ is simplicial, let $\beta \colon [m] \to [n]$ be a morphism in $\simplex$.
Then
\begin{align*}
	H(\alpha \times \sigma)\circ |\beta|
	&= h_\sigma \circ (|\alpha| \times \id_{\gsimplex^n}) \circ |\beta| \\
	&= h_\sigma \circ (|\alpha|\circ |\beta| \times |\beta|) \\
	&= h_{\sigma \circ |\beta|} \circ (|\alpha|\circ |\beta| \times \id_{\gsimplex^m}) \\
	&= H(\alpha \circ \beta \times \sigma \circ |\beta|),
\end{align*}
as required.
It therefore remains to construct such a natural family of homotopies $h_\sigma$.

\medskip
To construct \(h_\sigma\), it suffices to define it for non-degenerate simplices and verify compatibility with face maps.
Indeed, every simplex \(\sigma \colon \gsimplex^n \to M\) admits a unique factorization
\[
\sigma = \bar{\sigma} \circ s,
\]
where \(\bar{\sigma} \colon \gsimplex^m \to M\) is non-degenerate and \(s \colon \gsimplex^n \to \gsimplex^m\), if present, is induced by a composite of degeneracy maps.
Given \(h_{\bar{\sigma}} \colon \gsimplex^1 \times \gsimplex^m \to M\) we extend to \(\sigma\) by
\[
h_\sigma \coloneqq h_{\bar{\sigma}} \circ (\id \times s).
\]

For \(\sigma\) non-degenerate we distinguish two cases.
If $\sigma$ is already smooth and $\mc T$-transverse, we take $h_\sigma$ to be the constant homotopy.
Otherwise, we proceed by induction on $n$, the dimension of $\sigma$.

In the base case $n = 0$, every $\sigma$ is automatically smooth.
By \cref{L: transversality}, there is a homotopy on $[0,1/2]$ deforming $\sigma$ to a $\mc T$-transverse singular simplex.
We extend this homotopy to $[1/2,1]$ by constancy.

Now suppose $n>0$.
By induction, we have homotopies $h_{\sigma \circ \delta_i}$ which assemble into a homotopy
\[
h_{\partial \sigma} \colon [0,1] \times \bd \gsimplex^n \to M,
\]
assumed constant for $t \geq 1-\frac{1}{n+1}$.

On $\big[0,1-\tfrac{1}{n+1}\big] \times \gsimplex^n$, choose a retraction
\[
r \colon \big[0,1-\tfrac{1}{n+1}\big] \times \gsimplex^n \to \big(\{0\} \times \gsimplex^n\big) \cup \Big(\big[0,1-\tfrac{1}{n+1}\big] \times \bd \gsimplex^n\Big),
\]
and define $h_\sigma$ as the composition of $r$ with the map that agrees with $\sigma$ on $\{0\} \times \gsimplex^n$ and with $h_{\partial \sigma}$ on $\big[0,1-\tfrac{1}{n+1}\big] \times \bd \gsimplex^n$.
The restriction of $h_\sigma$ to $\{1-\tfrac{1}{n+1}\} \times \gsimplex^n$ determines a singular simplex $\sigma'$ whose proper faces are smooth and $\mc T$-transverse.

On $[1-\tfrac{1}{n+1},\,1-\tfrac{1}{n+2}] \times \gsimplex^n$, we proceed in two stages.
On the first half of this interval, we apply \cref{L: smoothing} to deform $\sigma'$ to a smooth simplex $\sigma''$.
On the second half, we apply \cref{L: transversality} to deform $\sigma''$ to a $\mc T$-transverse simplex $\sigma'''$.

Finally, on $[1-\tfrac{1}{n+2},\,1] \times \gsimplex^n$ we define $h_\sigma$ to be the constant homotopy at $\sigma'''$.

Since the homotopies constructed by \cref{L: smoothing} and \cref{L: transversality} leave the boundary invariant, we have $h_{\sigma \circ \delta_j} = h_\sigma \circ ([0,1] \times \delta_j)$ for every $j$, as claimed. \hfill\qedsymbol\par
	\sloppy
	\bibliographystyle{alpha}
	\bibliography{auxy/bibliography}
\end{document}